\newtheorem{thm}{Theorem}
\title{Structure and asymptotics for Motzkin numbers modulo primes using automata}
\author{Rob Burns}
\begin{document}
\maketitle
\begin{abstract}
We establish a lower bound of $\frac{2}{p (\, p - 1 \,)}$ for the asymptotic density of the Motzkin numbers divisible by a general prime number $p \geq 5$. We provide a criteria for when this asymptotic density is actually $1$. We also provide a partial characterisation of those Motzkin numbers which are divisible by a prime $p \geq 5$. All results are obtained using the automata method of Rowland and Yassawi.
\end{abstract}

\section{Introduction}
The Motzkin numbers $M_n$ are defined by
$$
M_n := \sum_{k \geq 0} \binom {n}{2k} C_{k}
$$
where $C_{k}\, $ are the Catalan numbers. 

There has been some work in recent years on analysing the Motzkin numbers $M_n$ modulo primes and prime powers. Deutsch and Sagan \cite{Sagan2006} provided a characterisation of Motzkin numbers divisible by $2, 4$ and $5$. They also provided a complete characterisation of the Motzkin numbers modulo $3$ and showed that no Motzkin number is divisible by $8$. Eu, Liu and Yeh \cite{Eu2008} reproved some of these results and extended them to include criteria for when $M_n$ is congruent to $\{ 2, 4, 6 \} \mod 8$. Krattenthaler and M{\"u}ller \cite{KM2013} established identities for the Motzkin numbers modulo higher powers of 3 which include the modulo 3 result of  \cite{Sagan2006} as a special case. Krattenthaler and M{\"u}ller \cite{Krat2016} have more recently extended this work to a full characterisation of \mbox{$M_n \mod 8$} in terms of the binary expansion of $n$. The results in \cite{KM2013} and \cite{Krat2016} are obtained by expressing the generating function of $M_n$ as a polynomial involving a special function. Rowland and Yassawi \cite{RY2013} investigated $M_n$ in the general setting of automatic sequences.  The values of $M_n$ (as well as other sequences) modulo prime powers can be computed via automata. Rowland and Yassawi provided algorithms for creating the relevant automata. They established results for $M_n$ modulo small prime powers, including a full characterisation of $M_n$ modulo 8 (modulo $5^2$ and $13^2$ are available from Rowland's website). They also established that $0$ is a forbidden residue for $M_n$ modulo $8$, $5^2$ and $13^2$. In theory the automata can be constructed for any prime power but computing power and memory quickly becomes a barrier. For example, the automata for $M_n$ modulo $13^2$ has over 2000 states. Rowland and Yassawi also went on to describe a method for obtaining asymptotic densities of $M_n$. We have previously \cite{1612.08146} used Rowland and Yassawi's work to analyse Motzkin numbers modulo specific primes up to $29$. This current paper will deal with a general prime $p$. It turns out that the behaviour of $M_n$ modulo a general prime is similar to the behaviour modulo small primes.

We will use Rowland and Yassawi's automata to establish a lower bound on the asymptotic density of the set of $M_n$ divisible by a general prime $p \geq 5$. This lower bound is $\frac{2}{p(p-1)}$. As shown in \cite{1612.08146} the asymptotic density is actually $1$ for some primes, e.g. $p = 7, 17,19$. We will also make note of some structure results that appear from an examination of the relevant state diagrams of the automata. In particular $M_n \equiv 0 \mod p$ when $n$ takes certain forms depending on the prime $p$. This generalises results that had already been shown to hold for particular small primes as mentioned in the previous paragraph. It is found that the behaviour of $M_n \mod p$ depends to some extent on the value of $p~\mod~6$ - this is either $+1$ or $-1$ of course. 

Table~\ref{results} summarises the results that will be presented in subsequent sections. Firstly, we will explain some of the definitions we have used in this paper.

The asymptotic density of a subset $S$ of $\mathbb{N}$ is defined to be
$$
\lim_{N \to \infty} \frac {1}{N} \#\{ n \in S : n \leq N \}
$$
if the limit exists, where $\#S$ is the number of elements in a set $S$. 
For a prime number $p \geq 5$ we mainly will be studying the asymptotic density of the set
\begin{equation}
\mbox{$S_p(0) = \{ n \in \mathbb{N}: M_n \equiv 0 \mod p \}$ .}
\end{equation}
However, for any $x \in \mathbb{N}$ we define the set 
\begin{equation*}
\mbox{$S_p(x) = \{ n \in \mathbb{N}: M_n \equiv x \mod p \}$ .}
\end{equation*}
\bigskip

For a number $p$, we write the base $p$ expansion of a number $n$ as 
$$
[\, n ]\,_{p} = \langle n_{r} n_{r-1} ... n_{1} n_{0} \rangle
$$ 
where $n_{i} \in [\, 0, p - 1]\,$  and 
$$
n = n_{r} p^{r} + n_{r-1} p^{r-1} + ... + n_{1} p + n_{0}.
$$ 

Binomial coefficients are prominent in this paper. Here the binomial coefficient $\binom{n}{m}$ is defined to be $0$ when $m > n$ or when either $n$ or $m$ is negative.

\bigskip

\begin{table}[tbp]
  \centering
   \begin{tabular}{ | c | c | p{10cm} |}
 \hline
 Prime & Density & Values of $n$ such that $M_n \equiv 0 \mod p$ \\ \hline
 & & \\
$p \equiv 1 \mod 6$ & $\geq \frac{2}{p(p-1)}$ & \mbox{$n = (pi + 1)p^k - 2 \;$ for $\; i \geq 0 \;$ and $\; k \geq 1$.} \\
& & \mbox{$n = (pi + p - 1)p^k - 1 \;$ for $\; i \geq 0\; $ and $\; k \geq 1$.} \\ 
& & \\ \hline
& & \\
& & \mbox{$n = (pi + 1)p^{2k} - 2 \; $ for $\; i \geq 0 \;$ and $\; k \geq 1$.} \\
$p \equiv -1 \mod 6$ & $\geq \frac{2}{p(p-1)}$ & \mbox{$n = (pi + p - 2)p^{2k+1} - 2 \;$ for $\; i \geq 0 \;$ and $\; k \geq 0$.} \\
& & \mbox{$n = (pi + 2)p^{2k+1} - 1 \;$ for $\; i \geq 0 \;$ and $\; k \geq 0$.} \\
& & \mbox{$n = (pi + p - 1)p^{2k} - 1 \;$ for $\; i \geq 0 \;$ and $\; k \geq 1$.} \\ & & \\ \hline
  \end{tabular}
  \caption{Table of results}
  \label{results}
\end{table}

\bigskip

\section{Background on Motzkin numbers modulo primes}

As mentioned in the introduction there have been results which characterise $M_n$ modulo primes $p \leq 29$. We collect these below to allow comparison with the results for a general prime.

\begin{thm}
\label{mod2}
(Theorem 5.5 of \cite{Eu2008}). The $n$th Motzkin number $M_n$ is even if and only if 
$$
\mbox{$n = (4i + \epsilon)4^{j+1} - \delta$ for $i, j \in \mathbb{N}, \epsilon \in \{1, 3\}$ and $\delta \in \{1, 2\}$. }
$$ 
\end{thm}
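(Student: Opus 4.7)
The plan is to apply the Rowland--Yassawi construction to build the $2$-automaton computing $M_n \bmod 2$, and then match its zero-output language against the arithmetic description in the theorem. First, using the algorithm of \cite{RY2013}, one constructs the finite automaton $\mathcal{A}_2$ that reads the base-$2$ digits of $n$ and outputs $M_n \bmod 2$. The construction begins from the algebraic equation $M(x) = 1 + xM(x) + x^{2} M(x)^{2}$ satisfied by the Motzkin generating function, iterates the two section operators modulo $2$, and halts once the orbit stabilises. Once $\mathcal{A}_2$ is in hand, identify the states whose output is $0$ and write down a regular expression $R$ describing the base-$2$ strings leading to such a state.

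Next, I would verify in both directions that $R$ matches the set
$$
E := \{(4i+\epsilon)4^{j+1} - \delta : i,j \geq 0,\ \epsilon \in \{1,3\},\ \delta \in \{1,2\}\}.
$$
For the forward direction, for each of the four pairs $(\epsilon,\delta)$ I would compute the binary expansion of $(4i+\epsilon)4^{j+1} - \delta$ explicitly by carrying out the subtraction with borrow; the result is the binary expansion of $i$, followed by a fixed two-bit block depending only on $\epsilon$, followed by $2j+2$ trailing $1$'s when $\delta = 1$, or by $2j+1$ ones capped by a trailing $0$ when $\delta = 2$. One then checks by tracing $\mathcal{A}_2$ that every such string lands in a zero-output state. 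For the reverse direction, any string accepted by $\mathcal{A}_2$ admits such a decomposition: reading off the trailing block (ones, possibly capped by a zero) recovers $j$ and $\delta$, the two bits above it recover $\epsilon$, and the remaining prefix recovers $i$.

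The main obstacle will be the bookkeeping needed to align the binary subtraction with the four cases $(\epsilon,\delta) \in \{1,3\} \times \{1,2\}$. The automaton itself is small and its construction is a finite computation, but the subtraction of $\delta$ introduces borrows across the trailing zero block of length $2j+2$, and one must show that these four patterns exhaust the zero-output language of $\mathcal{A}_2$ with no spurious elements. Once this dictionary between binary patterns and the arithmetic form is set up, the equivalence follows by inspection of a handful of transitions, and a check on small values of $n$ confirms that the ranges $i,j \geq 0$ correctly cover the boundary cases.
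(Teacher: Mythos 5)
Your plan is viable, but note first that the paper itself offers no proof of this statement: Theorem~\ref{mod2} is quoted verbatim as Theorem~5.5 of \cite{Eu2008} (equivalent to the mod-$2$ characterisation of Deutsch and Sagan), so there is no in-paper argument to compare against. What you propose is, in effect, the paper's own methodology for primes $p \geq 5$ --- the Rowland--Yassawi automaton built from $R$ and $Q$ via the Cartier operators --- transplanted to $p = 2$, whereas the cited source proves the result by different, non-automatic means (generating-function and congruence identities relating Motzkin and Catalan numbers). Your bookkeeping is set up correctly: $(4i+\epsilon)4^{j+1}-\delta$ has binary expansion given by the expansion of $i$, then the two-bit block $00$ (for $\epsilon=1$) or $10$ (for $\epsilon=3$), then $2j+2$ ones if $\delta=1$ or $2j+1$ ones followed by a final $0$ if $\delta=2$; and the resulting density $4\cdot\frac{1}{12}=\frac{1}{3}$ agrees with the known density of $S_2(0)$, which is a good consistency check. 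Two points need care when you execute the plan. First, in the Rowland--Yassawi convention (and in this paper) the automaton consumes digits starting from the \emph{least} significant one, so the regular expression for the zero-output language must be matched against the reversed strings, i.e.\ it begins with the run of ones (or the $0$ followed by ones) and ends with the expansion of $i$. Second, the reverse inclusion --- that the zero-output language contains nothing outside the four patterns --- is the substance of the theorem and cannot simply be asserted; you must actually exhibit the (small) automaton for $M_n \bmod 2$, identify its zero-valued states, and show their incoming language is exactly the stated set, exactly as the paper does explicitly for $p\geq 5$ in tables~\ref{results-1}--\ref{results-3}. Since that is a finite computation on a handful of states, the gap is one of execution rather than of idea.
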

\bigskip

\begin{thm}
\label{mod3}
(Corollary 4.10 of \cite{Sagan2006}). Let $T (\, 01 )\,$ be the set of numbers which have a base-$3$ representation consisting of the digits $0$ and $1$ only. Then the Motzkin numbers satisfy
$$
M_n \equiv
\begin{cases}
-1 \mod 3 & \quad \text{ if  $\quad n \in \, 3T (\, 01 )\, - 1$,} \\
1 \mod 3  & \quad \text{if  $\quad n \in 3T(\, 01 ) \quad$ or $\quad n \in 3T(\, 01 )\, - 2$,} \\
0 \mod 3 & \quad \text{otherwise.}
\end{cases}
$$
\end{thm}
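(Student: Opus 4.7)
The plan is to apply the Rowland--Yassawi construction to obtain an explicit 3-automaton computing $M_{n} \bmod 3$, and then to read the three cases of the theorem off the resulting automaton.

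To set things up, I would start from the algebraic relation $x^{2} M(x)^{2} + (x-1) M(x) + 1 = 0$ satisfied by the Motzkin generating function. Reducing modulo $3$ places $M(x)$ in $\mathbb{F}_{3}[[x]]$, where by the Rowland--Yassawi framework the closure of $M(x) \bmod 3$ under the three Cartier-like operators $\Lambda_{d} : \sum_{n} a_{n} x^{n} \mapsto \sum_{n} a_{3n+d} x^{n}$ is a finite set. This finite closure is the state set of a deterministic automaton that reads the base-3 digits of $n$ and outputs $M_{n} \bmod 3$ as the constant term of the terminal state.

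Once this automaton is built, I would match its accepting conditions to the three sets in the statement. Multiplication by $3$ appends a base-3 digit $0$, so $n \in 3T(01)$ exactly when the base-3 expansion of $n$ consists of digits in $\{0,1\}$ with final digit $0$. Correspondingly, $n \in 3T(01) - 2$ iff $n \equiv 1 \pmod 3$ and $(n+2)/3 \in T(01)$, and $n \in 3T(01) - 1$ iff $n \equiv 2 \pmod 3$ and $(n+1)/3 \in T(01)$. Identifying which states of the automaton produce outputs $1$, $-1$, or $0$, and verifying that the resulting regular languages on base-3 strings agree with these three descriptions, closes the argument.

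The main obstacle is the automaton calculation itself, which requires showing that the Cartier closure is finite and explicitly enumerating its states. For $p=3$ the simplification $1 - 2x - 3x^{2} \equiv 1 + x \pmod 3$ in the Motzkin radicand keeps the algebra relatively small, so only a handful of states are expected, but the polynomial manipulation that closes the kernel is the bookkeeping-intensive step. A useful consistency check throughout is that the initial values $(M_{n} \bmod 3)_{n \ge 0} = 1, 1, 2, 1, 0, 0, 0, 1, 2, 1, 0, 0, \ldots$ are reproduced by both the automaton and the claimed closed form; a purely combinatorial alternative would be to induct on $n$ using the recurrence $(n+2) M_n = (2n+1) M_{n-1} + 3(n-1) M_{n-2}$, splitting by $n \bmod 3$, but that route requires substantially more casework.
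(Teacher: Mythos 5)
First, a point of comparison: the paper itself does not prove this statement at all --- it is quoted as Corollary 4.10 of Deutsch and Sagan \cite{Sagan2006}, whose argument is generating-function/combinatorial rather than automaton-theoretic. Your plan to reprove it via the Rowland--Yassawi construction is therefore a different route from the cited source, but it is exactly the methodology the paper applies to primes $p \geq 5$, and it is viable: the kernel (Cartier closure) of an algebraic series over $\mathbb{F}_3$ is finite, the mod-$3$ automaton for $M_n$ is small, and the theorem can be read off it. The problem is that, as written, your proposal defers precisely the step that constitutes the proof. You never exhibit the states of the closure, the transitions $\Lambda_d$ for $d \in \{0,1,2\}$, the output (constant term) of each state, or the verification that the states outputting $1$, $-1$ and $0$ are reached exactly by the base-$3$ digit strings corresponding to $3T(01)$, $3T(01)-2$, $3T(01)-1$ and the complement. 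Note also that the finiteness of the Cartier closure of the one-variable series $M(x)$ is Christol's theorem; Rowland--Yassawi's effective procedure works with a two-variable polynomial representation built from the relation $x^2M^2+(x-1)M+1=0$ (the analogue of $R$ and $Q$ in equations (\ref{R}) and (\ref{Q})), so ``build the automaton'' hides a genuine computation that must be carried out and displayed for the argument to stand.

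Two smaller but concrete issues. Your consistency-check list of values is wrong at $n=10$ and $n=11$: $M_{10}=2188\equiv 1$ and $M_{11}=5798\equiv 2 \pmod 3$ (consistent with $10\in 3T(01)-2$ and $11\in 3T(01)-1$), not $0,0$ as you wrote, so the check you propose would reject the correct answer. And the fallback induction on the recurrence $(n+2)M_n=(2n+1)M_{n-1}+3(n-1)M_{n-2}$ does not merely involve ``more casework'': when $n\equiv 1 \pmod 3$ both $n+2$ and $2n+1$ vanish modulo $3$, so the recurrence reduces to $0\equiv 0$ and determines nothing about $M_n$; one would have to work modulo $9$ or track $3$-adic valuations, which is a genuine obstruction, not bookkeeping.
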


\bigskip

\begin{thm}
\label{mod5}
(Theorem 5.4 of \cite{Sagan2006}). The Motzkin number $M_n$ is divisible by $5$ if and only if $n$ is one of the following forms
$$
(\, 5i + 1 )\, 5^{2j}  - 2,\, (\, 5i + 2 )\, 5^{2j-1}  - 1,\, (\, 5i + 3 )\, 5^{2j-1}  - 2,\,  (\, 5i + 4 )\, 5^{2j}  - 1
$$
where $i, j \in \mathbb{N}$ and $j \geq 1$.
\end{thm}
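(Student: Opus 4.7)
The plan is to prove this in the automata-theoretic framework adopted throughout the paper, by explicitly constructing the minimal finite automaton $A$ which reads the base-$5$ expansion of $n$ and outputs $M_n \mod 5$. The construction follows the recipe of Rowland and Yassawi: starting from the generating function $\sum_{n \geq 0} M_n x^n$ reduced modulo $5$, one iteratively applies the five Cartier section operators $\Lambda_0, \ldots, \Lambda_4$ until the orbit closes up, producing a finite state set. Since $p = 5$ is small, the orbit is of modest size (it has been tabulated by Rowland) and the transition table can be written down explicitly.

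With $A$ in hand, the next step is to read off the language $L_0 \subseteq \{0,1,2,3,4\}^{*}$ consisting of those base-$5$ words which, when fed to $A$, terminate in a state whose output label is $0$. Inspection of the state diagram should reveal that the $0$-labelled portion of the automaton is reached from the initial state along a small number of edges, followed by an absorbing component supported on the digit $4$. I would expect one or two absorbing loops, whose period reflects the parity restriction on the exponent ($2j$ versus $2j-1$) appearing in the theorem, entered after reading a specific short suffix that determines which of the four forms is active. Converting $L_0$ back to arithmetic, using the elementary observation that the base-$5$ expansion of $(5i+a)5^{m} - b$ for $a \in \{1,2,3,4\}$ and $b \in \{1,2\}$ consists of the digits of $i$ followed by a single pivot digit, a run of $4$'s of length $m-1$ or $m$, and possibly a trailing $3$, should reproduce exactly the four stated forms.

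The main obstacle will be the verification stage: for the particular automaton produced in the first step one must check both that the four arithmetic forms lead into $0$-states (sufficiency) and that no other base-$5$ word does (necessity). Sufficiency is straightforward once the absorbing loops are identified and labelled, but necessity requires a finite case-check over every non-absorbing state to confirm that no alternative path into a $0$-state exists. Since this is a finite computation on a small automaton, the difficulty is book-keeping rather than conceptual; the substantive content of the theorem, namely the parity conditions on the exponent $j$, drops out of the cycle structure of the absorbing component once the state diagram is in hand.
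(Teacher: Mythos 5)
The paper does not actually prove this statement: it is quoted verbatim as Theorem~5.4 of Deutsch and Sagan, whose own proof proceeds by explicit congruence identities for Motzkin and Catalan numbers, not by automata. So there is no internal proof to compare against; the closest thing is the paper's general-$p$ machinery, which for $p\equiv -1 \bmod 6$ (specialised to $p=5$, where $p-2=3$) yields exactly the four stated forms --- but only as \emph{sufficient} conditions for $5\mid M_n$ (Table~1 is explicitly a partial characterisation). Your plan is therefore in the spirit of the paper rather than of the original source, and the sufficiency half of your argument is essentially what Tables~4--5 and Figure~4 deliver: runs of the digit $4$ cycle with period $2$ between two non-constant states, the parity of the run selects which single ``exit'' digit ($1$, $3$, $2$ or $0$) leads into the absorbing zero state, and translating those digit strings back to arithmetic gives the four forms. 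One small correction to your picture: the absorbing component is not ``supported on the digit $4$''; the digit-$4$ edges form $2$-cycles among non-zero states, and the unique absorbing state is the constant $0$, entered by one further specific digit.

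The genuine gap is the necessity direction, which is precisely the half the paper's general-$p$ analysis does not and cannot give (for some primes, e.g.\ $p=7,17,19$, the density of $S_p(0)$ is $1$, so no clean ``only if'' of this shape exists in general). Your proposal defers this to ``a finite case-check'' with phrases like ``should reveal'' and ``I would expect'', but for $p=5$ this check is the substantive content: you must construct the full mod-$5$ automaton including its constant states and verify (i) that $c(5,d)=\Lambda_{d,d}(1\cdot Q^{4})\big|_{x=y=0}\not\equiv 0 \bmod 5$ for $2\le d\le 3$, so that, since a nonzero constant state $c$ maps to the constant $c\cdot c(5,d)$, no nonzero constant state ever reaches $0$; and (ii) that none of the non-constant states ($s_1$, $s_2$, and the four states of the form $\pm xy(y+1)+\mathrm{const}$) has any transition into the zero state other than the ones listed above. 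Until that enumeration is actually carried out, the ``if and only if'' is not established; as written, your text is a correct and workable plan, matching the paper's methodology, but not yet a proof.
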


\bigskip
The above results and others have been used to establish asymptotic densities of the sets $S_q(0)$
for $q = 2, 4, 8$ and also for primes up to $29$ - see \cite{RY2013}, \cite{Burns:2016v0} and \cite{1612.08146}. In particular the asymptotic density of $S_2(0)$ is $\frac{1}{3}$ (\cite{RY2013} example 3.12), the asymptotic density of $S_4(0)$ is $\frac{1}{6}$ (\cite{RY2013} example 3.14), the asymptotic density of $S_q(0)$ is $1$ for $q \in \{3, 7, 17, 19 \}$ \cite{Burns:2016v0} and \cite{1612.08146}, the asymptotic density of $S_q(0)$ is $\frac{2}{q(q-1)}$ for $q \in \{ 5, 11, 13, 23 \}$ \cite{Burns:2016v0} and \cite{1612.08146}. The asymptotic density of $S_{29}(0)$ satisfies \mbox{$\frac{2}{29*28} < S_{29}(0) < 1$}. 

There are 2 questions which we will investigate in this article. Firstly, what is the asymptotic density of $S_p(0)$ for general primes $p \geq 5$? Secondly, what structural features are evident in the distribution of $M_n \mod p$? The investigation will proceed by constructing an automaton for a general prime following the instructions from \cite{RY2013} (Algorithm 1). The state diagram for the automaton provides an excellent tool for analysing the behaviour of $M_n \mod p$. 

\bigskip

\section{Three useful series and some modular identities involving binomial coefficients}

There are $3$ series that will appear regularly during the construction of the automaton. We will therefore devote this section to a discussion of these series which are interesting in their own right. We define the series as follows:

\bigskip

\begin{equation}
\label{an}
\mbox{$a_n := \sum_{k \geq 0} (-1)^k \, \binom{n-k}{k}$}
\end{equation}
\bigskip
\begin{equation}
\label{bn}
\mbox{$b_n := \sum_{k \geq 0} (-1)^k \, \binom{n-k}{k} \, k$}
\end{equation}
\bigskip
\begin{equation}
\label{cn}
\mbox{$c_n := \sum_{k \geq 0} (-1)^k \, \binom{n-k}{k} \, k^2$.}
\end{equation}

\bigskip

\begin{thm}
\label{abcseries}
$$
a_n = \cos(\, \frac{\pi n}{3} \,) + \frac{1}{\sqrt{3}} \sin(\,\frac{\pi n}{3} \,)
$$
$$
b_n = \frac{2 n}{3} \cos(\, \frac{\pi n}{3} \,) - \frac{2}{3 \sqrt{3}} \sin(\,\frac{\pi n}{3} \,)
$$ and 
$$
c_n = \frac{n}{3} (\, n - 1 \,) \cos(\, \frac{\pi n}{3} \,) - \frac{1}{3 \sqrt{3}} (\, n^2 + n - 2 \,) \sin(\,\frac{\pi n}{3} \,)
$$
\end{thm}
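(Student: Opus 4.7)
The plan is to reduce the problem to verifying closed forms against linear recurrences. Applying Pascal's rule $\binom{n-k}{k} = \binom{n-k-1}{k} + \binom{n-k-1}{k-1}$ inside each of the three sums and re-indexing the second piece via $j = k-1$ yields the coupled system
\begin{align*}
a_n &= a_{n-1} - a_{n-2}, \\
b_n &= b_{n-1} - b_{n-2} - a_{n-2}, \\
c_n &= c_{n-1} - c_{n-2} - 2 b_{n-2} - a_{n-2}.
\end{align*}
The derivations for $b_n$ and $c_n$ use the expansions $(j+1) = j+1$ and $(j+1)^2 = j^2 + 2j + 1$ to re-express the re-indexed sums back in terms of $a_{n-2}$, $b_{n-2}$ and $c_{n-2}$.

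The homogeneous recurrence $a_n = a_{n-1} - a_{n-2}$ has characteristic polynomial $x^2 - x + 1$ with roots $e^{\pm i \pi/3}$, so $a_n$ is a real linear combination of $\cos(\pi n/3)$ and $\sin(\pi n/3)$ whose coefficients are determined by $a_0 = 1$ and $a_1 = 1$; a short computation recovers the stated formula. For $b_n$ and $c_n$ the inhomogeneous forcing terms lie in the solution space of the homogeneous equation, so the closed forms pick up resonant factors of $n$ and $n^2$ respectively. I would therefore write ans\"atze
$$b_n = (\alpha n + \beta) \cos(\pi n/3) + (\gamma n + \delta) \sin(\pi n/3),$$
$$c_n = (\alpha' n^2 + \beta' n + \gamma') \cos(\pi n/3) + (\delta' n^2 + \epsilon' n + \zeta') \sin(\pi n/3),$$
substitute into the recurrences above, and pin down the unknown constants by matching the values $b_0, \ldots, b_3$ and $c_0, \ldots, c_5$ computed directly from the defining sums.

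The main obstacle is the trigonometric bookkeeping when substituting an ansatz into the recurrences: one needs the addition identities $\cos(\pi(n-j)/3) = \cos(\pi n/3)\cos(\pi j/3) + \sin(\pi n/3)\sin(\pi j/3)$ for $j = 1, 2$ together with the analogous identities for $\sin$, in order to split each recurrence into two scalar equations, one for the coefficient of $\cos(\pi n/3)$ and one for the coefficient of $\sin(\pi n/3)$. Each then reduces to an elementary polynomial identity in $n$. An equivalent but more compact route uses generating functions: swapping the order of summation and invoking $\sum_{m \geq 0} \binom{m}{k} x^m = x^k/(1-x)^{k+1}$ gives $\sum_n a_n x^n = 1/(1 - x + x^2)$, $\sum_n b_n x^n = -x^2/(1 - x + x^2)^2$, and a corresponding rational function with denominator $(1 - x + x^2)^3$ for $c_n$, from which the claimed closed forms drop out by partial fractions over $\mathbb{C}$ at the factorisation $1 - x + x^2 = (1 - e^{i\pi/3} x)(1 - e^{-i\pi/3} x)$.
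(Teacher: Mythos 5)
Your proposal is correct and follows essentially the same route as the paper: both derive the coupled recurrences $a_n = a_{n-1} - a_{n-2}$, $b_n = b_{n-1} - b_{n-2} - a_{n-2}$, $c_n = c_{n-1} - c_{n-2} - 2b_{n-2} - a_{n-2}$ via Pascal's rule and re-indexing, and then solve by standard methods for linear difference equations (the paper leaves this last step implicit, while you spell out the characteristic roots $e^{\pm i\pi/3}$, the resonant ans\"atze, and a generating-function alternative). No gaps.
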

\begin{proof}
The three series each satisfy a linear difference equation. The solutions to these equations can be derived using standard methods. For $a_n$ we have $a_0 = 1$, $a_1 = 1$ and
\begin{eqnarray*}
a_{n+1} &=& \sum_{k \geq 0} (-1)^k \, \binom{n+1-k}{k}\\
&=& \sum_{k \geq 0} (-1)^k \, \binom{n-k}{k} + \sum_{k \geq 0} (-1)^k \, \binom{n-k}{k-1}\\
&=& \sum_{k \geq 0} (-1)^k \, \binom{n-k}{k} - \sum_{k \geq 0} (-1)^k \, \binom{n-1-k}{k}
\end{eqnarray*}
where we have used the binomial identity
\begin{equation}
\label{bin1}
\binom{s+1}{r} = \binom{s}{r} + \binom{s}{r-1}.
\end{equation}
So $a_n$ satisfies the difference equation
\begin{equation}
\label{adiff}
a_{n+1} - a_n + a_{n-1} = 0
\end{equation}
The solution of this difference equation is given in the statement of the theorem. The initial values of $b_n$ are $b_0=0$, $b_1=0$ and, using the identity (\ref{bin1}) again, we can show $b_n$ satisfies the non-homogeneous difference equation
\begin{equation}
\label{bdiff}
b_{n+1} - b_n + b_{n-1} = -a_{n-1}.
\end{equation}
Finally, $c_0=c_1=0$ and $c_n$ satisfies the non-homogeneous difference equation
\begin{equation}
\label{cdiff}
c_{n+1} - c_n + c_{n-1} = -a_{n-1} - 2 b_{n-1}.
\end{equation}
\end{proof}
\bigskip
The results can be extended to the series
$$
\mbox{$\sum_{k \geq 0} (-1)^k \, \binom{n-k}{k} \, k^m$.}
$$
where $m$ is arbitrary but we only need the equations for $a_n$,  $b_n$ and $c_n$.
We list here some of the properties of the series which will be needed later.
Firstly, $a_n$ is a periodic sequence with period $6$. Starting with $n = 0$ the sequence $a_n$ is \mbox{$1, 1, 0, -1, -1, 0, 1, 1, ...$}. For any $n \in \mathbb{N}$, 

$$
a_{n-1} =
\begin{cases}
\label{a_n-1_cases}
1  \; & \text{if $\; n \equiv 1 \mod 6$;}\\
-1 \; & \text{if $\; n \equiv -1 \mod 6$;}\\
\end{cases}
$$

$$
a_{n} =
\begin{cases}
\label{a_n_cases}
1 \; & \text{if $\; n \equiv 1 \mod 6$;}\\
0 \; & \text{if $\; n \equiv -1 \mod 6$;}\\
\end{cases}
$$

$$
b_{n} =
\begin{cases}
\label{b_n_cases}
\frac{n-1}{3}  \; & \text{if $\; n \equiv 1 \mod 6$;}\\
\frac{n+1}{3}  \; & \text{if $\; n \equiv -1 \mod 6$;}\\
\end{cases}
$$
and
$$
c_{n} =
\begin{cases}
\label{a_p_cases}
-\frac{(n-1)}{3}  \; & \text{if $\; n \equiv 1 \mod 6$;}\\
\frac{n^2 - 1}{3}  \; & \text{if $\; n \equiv -1 \mod 6.$}\\
\end{cases}
$$

The above identities for $a_n$ show that if $p$ is prime (so $p \equiv \frac{+}{} 1 \mod 6$)
\bigskip
\begin{equation}
\label{anid}
a_{p-1} - 2 a_p + 1 = 0
\end{equation}

\bigskip
and
\bigskip
\begin{equation}
\label{bcnid}
b_p + c_p =
\begin{cases}
0  \; & \text{if $\; p \equiv 1 \mod 6$;}\\
\frac{p(\, p + 1 \,)}{3}  \; & \text{if $\; p \equiv -1 \mod 6.$}\\
\end{cases}
\end{equation}

\bigskip
We will also need the following identities
\begin{equation}
\label{long}
 a_p - b_p - 1 + \frac{1}{2} b_{p+1} + \frac{1}{2} c_{p+1} - b_{p+2} - c_{p+2} =
 \begin{cases}
 \frac{p (p + 5)}{6}  \; & \text{if $p \equiv 1 \mod 6$;}\\
\frac{p(p + 1)}{6} - 1  \; & \text{if $p \equiv -1 \mod 6.$}\\
\end{cases}
 \end{equation}

\bigskip
\begin{equation}
\label{long2}
 - a_{p-1} + a_p + b_p - 2 b_{p+1} - 2 a_{p+1} + 1 =
 \begin{cases}
 p + 2 \; & \text{if $p \equiv 1 \mod 6$;}\\
-p - 1  \; & \text{if $p \equiv -1 \mod 6.$}\\
\end{cases}
 \end{equation}
 \bigskip
 
There are also a few modular identities that will be useful in simplifying some of the equations that will appear later. Firstly, for $k, l \in \mathbb{N}$,
\begin{equation}
\label{p-1-k}
\binom{p-1-k}{l} \equiv (-1)^l \; \binom{k+l}{k} \mod p.
\end{equation}
In particular,
\begin{eqnarray*}
\binom{p-1}{l} &\equiv& (-1)^l \mod p\\
\binom{p-2}{l} &\equiv& (-1)^l (\, l + 1 \,) \mod p\\
\binom{p-3}{l} &\equiv& (-1)^l \, \binom{l+2}{2} \equiv  \frac{1}{2} \, (-1)^l \, (\, l + 2 \,) (\, l + 1 \,) \mod p.\\
& &
\end{eqnarray*}

\bigskip

\section{Background on automata for $M_n \mod p$}
Rowland and Yassawi showed in \cite{RY2013} that the behaviour of sequences such as $M_n~\mod~p$ can be studied by the use of finite state automata. The automaton has a finite number of states and rules for transitioning from one state to another. In the form described in \cite{RY2013} each state $s$ is represented by a polynomial in 2 variables $x$ and $y$. Each state has a value obtained by evaluating the polynomial at $x = 0$ and $y = 0$. All calculations are made modulo $p$. For the Motzkin case the initial state
$s_1$ is represented by the polynomial
\begin{equation}
\label{R}
R(\, x, y \, )\,  = \, y(\, 1 - xy - 2x^2y^2 - 2x^2y^3 \, ).
\end{equation}

New states are constructed by applying the Cartier operator $\Lambda_{d, d}$ to the polynomials
$$
s_i*Q^{p-1}(\, x, y \, )
$$
for $d \in \{0, 1, ... , p-1 \}$ where $\{ s_i \}$ are the already calculated states and the polynomial $Q$ is defined by
\begin{equation}
\label{Q}
Q(\, x, y \, )\,  = \, x^2y^3 + 2x^2y^2 + x^2y + xy + x - 1 = x^2y(y + 1)^2 + x(y + 1) - 1.
\end{equation}
The Cartier operator is a linear map on polynomials defined by
\begin{equation}
\label{cartier}
\Lambda_{d_1, d_2} (\, \sum_{m, n \geq 0} a_{m, n} \, x^{m} \, y^{n} \, ) = \sum_{m, n \geq 0} a_{pm + d_1, pn + d_2} \, x^{m} y^{n}.
\end{equation}

Since the Cartier operator maintains or reduces the degree of the polynomial and there are only finitely many polynomials modulo $p$ of each degree, all states of the automaton are obtained within a known finite time. It will be seen later that the automaton has at most $p+6$ states. If
$$
\Lambda_{d, d} (s*Q^{p-1}) = t
$$
for states (i.e. polynomials) $s$ and $t$ then the transition from state $s$ to state $t$ under the input $d$ is part of the automaton.

To calculate $M_n \mod p$, $n$ is first represented in base $p$. The base $p$ digits of $n$ are fed into the automaton starting with the least significant digit. The automaton starts at the initial state $s_1$ and transitions to a new state as each digit is fed into it. The value of the final state after all $n$'s digits have been used is equal to $M_n \mod p$. Refer to \cite{RY2013} for more details. 

In the remainder of this article we will provide details of the automata for a general prime $p \geq 5$. We will provide the polynomials and values for the states and the relevant transitions between states. States are listed as $s_1$, $s_2$, ... . Transitions, when provided, will be in the form $(\, s, j)\,  \to t$ which means that if the automaton is in state $s$ and receives digit $j$ then it will move to state $t$.  We will call a state $s$ a {\em \bf loop} state if all transitions from $s$ go to $s$ itself, i.e. \mbox{$(\, s, j)\,  \to s$} for all choices of $j$.

States and transitions are represented visually in the form of a directed graph. For example, figure \ref{transition} represents an automaton which moves from state $s_1$ to state $s_2$ when it receives the digit $3$. It also moves from state $s_2$ to state $s_2$ (i.e. loops) if it is in state $s_2$ and receives a digit $4$.

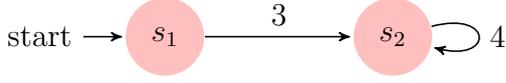
\begin{figure}[tbp]
\begin{tikzpicture}[->,>=stealth',shorten >=1pt,auto,node distance=3.0cm,
                    semithick]
  \tikzstyle{every state}=[fill=pink,draw=none,text=black]

  \node[initial,state] (A)                    {$s_1$};
  \node[state]         (B) [right of=A] {$s_2$};

  \path (A) edge    node {3} (B)
        (B) edge [loop right] node {4} (B);                      
 \end{tikzpicture}
                       \caption{Example of transition from state $s_1$ to state $s_2$ and a loop.}
                        \label{transition}
\end{figure}

\bigskip

\section{Preliminary calculations}

Before we start constructing the automata it will be convenient to first precompute $\Lambda_{d, d} (\, s(x, y)*Q(\, x, y \, )^{p-1} \,)$ for some simple choices of the polynomial $s$. The relevant results are contained in tables~\ref{lambdaactions-1} and \ref{lambdaactions-2}. When reading the table note that \mbox{$\binom{n}{m} = 0$} for \mbox{$m < 0$}. We will go through a few of the calculations from tables~\ref{lambdaactions-1} and \ref{lambdaactions-2}.

\begin{table}[tbp]
  \centering
   \begin{tabular}{ | c ||  p{10cm} |}
\hline
& \\
State $s$ &  $\Lambda_{d, d} (\, s*Q^{p-1} \,)$ \\
\hline \hline
& \\
& $1 \;$ for $\; \mathbf{0 \leq d \leq 1}$ \\
& \\
$1$ & $\sum_{k \geq 0} \binom{p-1-k}{d-2k} \binom{d}{k} (-1)^d \;$ for $\; \mathbf{2 \leq d \leq p - 3}$ \\
& \\
& $b_p  \;$ for $\mathbf{d = p-2}$ \\
& \\
& $a_{p-1} \;$ for $\mathbf{d = p-1}$ \\
\hline
& \\
& $0$ for $\mathbf{d = 0}$ \\
& \\
& $1$ for $\mathbf{d = 1}$ \\
&\\
& $\sum_{k \geq 0} \binom{p-1-k}{d-2k} \binom{d}{k+1} (-1)^d \;$ for $\; \mathbf{2 \leq d \leq p - 3}$ \\
$y$ & \\
& $-\sum_{k \geq 0} \binom{p-1-k}{k+1} \binom{p-2}{k+1} + xy + xy^2 \;$ \\
& \\
& $= -a_p - b_p + 1 + xy + xy^2 \;$ for $\; \mathbf{d = p - 2}$ \\
& \\
& $\sum_{k \geq 0} \binom{p-1-k}{k} \binom{p-1}{k+1} \; = - a_{p-1}$ for $\; \mathbf{d = p - 1}$ \\
\hline
& \\
& $-\sum_{k \geq 0} \binom{p-1-k}{k+1} \binom{p-2}{k} \; xy \; = b_p \; xy$ for $\; \mathbf{d = 0}$ \\
& \\
$x^2y^2$ & $\sum_{k \geq 0} \binom{p-1-k}{k} \binom{p-1}{k} \; xy \; = a_{p-1} \; xy \;$ for $\; \mathbf{d = 1}$ \\
& \\
&  $\sum_{k \geq 0} \binom{p-1-k}{d-2-2k} \binom{d-2}{k} (-1)^{d} \;$ for $\; \mathbf{2 \leq d \leq p - 1}$ \\
\hline
& \\
& $\sum_{k \geq 0} \binom{p-1-k}{k} \binom{p-1}{k+1} \; xy \; = -a_{p-1} \; xy \;$ for $\; \mathbf{d = 0}$ \\
& \\
& $0 \;$ for $\; \mathbf{d = 1}$ \\
$x y^2$ & \\
& $\sum_{k \geq 0} \binom{p-1-k}{d-1-2k} \binom{d-1}{k+1} (-1)^{d+1} \;$ for $\; \mathbf{2 \leq d \leq p-3}$ \\
& \\
& $-\frac{1}{2} (c_{p+1} + b_{p+1} ) \;$ for $\; \mathbf{d = p-2}$\\
&\\
& $-\sum_{k \geq 0} \binom{p-1-k}{k+1} \binom{p-2}{k+1} \; + xy + xy^2 \;$\\
& \\
& $= -a_p - b_p + 1 + xy + xy^2 \;$ for $\; \mathbf{d = p - 1}$ \\
\hline
  \end{tabular}
  \caption{Table of values of $\Lambda_{d, d} (\, s*Q^{p-1} \,)$}
  \label{lambdaactions-1}
\end{table}

\bigskip

\begin{table}[tbp]
  \centering
   \begin{tabular}{ | c ||  p{10cm} |}
 \hline
 & \\
State $s$ &  $\Lambda_{d, d} (\, s*Q^{p-1} \,)$ \\
& \\ \hline \hline
& \\
& $a_{p-1} \, x y \;$ for $\; \mathbf{d = 0}$ \\
&\\
& $1 \;$ for $\; \mathbf{d = 1}$\\
& \\
$xy$ & $\sum_{k \geq 0} \binom{p-1-k}{d-1-2k} \binom{d-1}{k} (-1)^{d+1} \;$ for $\; \mathbf{2 \leq d \leq p-3}$ \\
& \\
& $\frac{1}{2} (c_{p+1} - b_{p+1} ) \;$ for $\; \mathbf{d = p-2}$\\
& \\
& $b_{p} \;$ for $\; \mathbf{d = p-1}$\\
\hline
& \\
& $-\sum_{k \geq 0} \binom{p-1-k}{k+1} \binom{p-2}{k+1} \; xy + x^2 y^2 + x^2 y^3 \;$\\
& \\
& $= (-a_p - b_p + 1) \; xy + x^2 y^2 + x^2 y^3 \;$ for $\; \mathbf{d = 0}$ \\ 
& \\
$x^2y^3$ & $\sum_{k \geq 0} \binom{p-1-k}{k} \binom{p-1}{k+1} \; xy \; = -a_{p-1} \; xy \;$ for $\; \mathbf{d = 1}$ \\
& \\
& $0 \;$ for $\; \mathbf{d = 2}$ \\
& \\
&  $\sum_{k \geq 0} \binom{p-1-k}{d-2-2k} \binom{d-2}{k+1} (-1)^{d} \;$ for $\; \mathbf{3 \leq d \leq p - 2}$ \\
&\\
& $-\frac{1}{2} (c_{p+1} + b_{p+1} ) \;$ for $\; \mathbf{d = p-1}$\\
\hline
& \\
& $-\sum_{k \geq 0} \binom{p-1-k}{k+1} \binom{p-2}{k+2} \; xy -2 x^2 y^2 - 2 x^2 y^3 \;$\\
& \\
& $= (\, 2 a_p + b_p - 2 \,) x y -2 x^2 y^2 - 2 x^2 y^3 \;$ for $\; \mathbf{d = 0}$ \\
& \\
& $\sum_{k \geq 0} \binom{p-1-k}{k} \binom{p-1}{k+2} \; xy \; = a_{p-1} \; xy \;$ for $\; \mathbf{d = 1}$ \\
& \\
$x^2 y^4$ & $0 \;$ for $\; \mathbf{2 \leq d \leq 3}$ \\
& \\
& $\sum_{k \geq 0} \binom{p-1-k}{d-2-2k} \binom{d-2}{k+2} (-1)^d \;$ for $\; \mathbf{4 \leq d \leq p-3}$ \\
& \\
& $-\sum_{k \geq 0} \binom{p-1-k}{k+3} \binom{p-4}{k+2} + xy + xy^2 \;$ for $\; \mathbf{d = p-2}$ \\
& \\
&  $\sum_{k \geq 0} \binom{p-1-k}{k+2} \binom{p-3}{k+2} -xy-xy^2$\\
& \\
& $= \frac{1}{2} (c_{p+1} + 3 b_{p+1} + 2 a_{p+1} - 2 \;)-xy-xy^2 \;$ for $\; \mathbf{d = p - 1}$ \\
\hline
  \end{tabular}
  \caption{Table of values of $\Lambda_{d, d} (\, s*Q^{p-1} \,)$}
  \label{lambdaactions-2}
\end{table}
\bigskip

Firstly, the polynomial $Q^{p-1}$ can be written as
\begin{eqnarray*}
Q^{p-1} (x, y) &=& \biggr( \, x^2y(y + 1)^2 + x(y + 1) - 1 \, \biggr)^{p-1}\\
&=& \sum_{k \geq 0} \binom{p-1}{k} x^{2k} y^k (\, y+1 \,)^{2k} \biggr( \, x(y + 1) - 1 \, \biggr)^{p-1-k}\\
&=& \sum_{k, \, l  \geq 0} \binom{p-1}{k} x^{2k} y^k (\, y+1 \,)^{2k} \binom{p-1-k}{l} x^l (y + 1 )^l ( - 1 )^{p-1-k-l}\\
&=& \sum_{k,\,  l  \geq 0} \binom{p-1}{k} \binom{p-1-k}{l}  x^{2k+l} y^k (\, y+1 \,)^{2k+l} x^l ( - 1 )^{k + l}.\\
& &
\end{eqnarray*}
Using the identity
$$
\binom{p-1}{k} \equiv (-1)^k \mod p.
$$
\bigskip
we then have
\begin{equation}
\label{Qp-1}
Q^{p-1} (x, y) = \sum_{k, l, m \geq 0} \binom{p-1-k}{l} \binom{2k+l}{m} (\, -1 \,)^l x^{2k+l} y^{k+m}.
\end{equation}

\bigskip

We define $a_{k, l, m}$ by
\begin{equation}
\label{aklm}
a_{k, l, m} := \binom{p-1-k}{l} \binom{2k+l}{m} (\, -1 \,)^l.
\end{equation}
We then have
$$
Q^{p-1} (x, y) = \sum_{k, l, m \geq 0} a_{k, l, m} \, x^{2k+l} \, y^{k+m} = \sum_{i, j \geq 0} b_{i, j} \, x^i \, y^j
$$
where
$$
b_{i, j} = \;\;  \sum a_{k, l, m}
$$

and the sum is over $k, l, m \geq 0 : \; 2k + l = i, \; k + m = j$. So,

\begin{equation}
\label{bij}
b_{i, j}= \sum_{k \geq 0}  \binom{p-1-k}{ i - 2k} \binom{i}{j - k} (\, -1 \,)^{i}.
\end{equation}
\bigskip

We will next calculate the effect of the Cartier operator $\Lambda_{d, d}$ on a general monomial $x^r y^t$. For $0 \leq d \leq p-1$ we have
$$
\Lambda_{d, d} (x^r y^t Q^{p-1} ) = \Lambda_{d, d} (\, \sum_{i, j \geq 0} b_{i, j} x^{i + r} y^{j + t} \,). 
$$
$$
= \Lambda_{d, d} (\, \sum_{i \geq r, j \geq t} c_{i, j} x^{i} y^{j} \,) = \sum_{i \geq r, \; j \geq t} c_{pi + d, pj +d} x^i y^j
$$
where $c_{i, j}$ is defined by $c_{i,j} := b_{i-r, j-t}$. So
\begin{equation}
\label{cpij}
\Lambda_{d, d} (x^r y^t Q^{p-1} ) = \sum_{i \geq r, \; j \geq t} \biggr( \, \sum_{k \geq 0} \binom{p-1-k}{pi+d-r-2k} \; \binom{pi+d-r}{pj+d-t-k} \; (-1)^{i+d+r} \, \biggr) x^i \, y^j.
\end{equation}

\bigskip
The sum above is finite as the indices $i$ and $j$ satisfy the restrictions
$$
\mbox{$r \leq pi + d \leq 2(p-1) + r \;$ and $\; t \leq pj + d \leq 3(p-1) + t$.}
$$
We will first look at the monomial $1$, for which we have $r = t = 0$. For this choice of $r$ and $t$ we need to determine
$$
\binom{p-1-k}{pi+d-2k} \; \binom{pi+d}{pj+d-k} \; (-1)^{i+d}.
$$
for all possible choices of $i$ and $j$. In this case it turns out all terms are $0 \mod p$ except for the $i = j = 0$ term. So, for $0 \leq d \leq p-1$,
$$
\Lambda_{d, d} (\, 1*Q^{p-1} \,) = \sum_{k \geq 0} \binom{p-1-k}{d-2k} \binom{d}{k} (-1)^d. 
$$
\bigskip
For $d= p-2$ this sum reduces to $b_p$ as
\begin{eqnarray*}
\sum_{k \geq 0} \binom{p-1-k}{p-2-2k} \binom{p-2}{k} (-1)^{p-2} &=& \sum_{k \geq 0} \binom{p-1-k}{k+1} (-1)^{k+1} (k+1)\\
& &\\
&=& \sum_{k \geq 1} \binom{p-k}{k} (-1)^{k} k\\
& &\\
&=& b_p.\\
& &
\end{eqnarray*}
\bigskip
For $d = p-1$ the sum is $a_{p-1}$ as
$$
\sum_{k \geq 0} \binom{p-1-k}{p-1-2k} \binom{p-1}{k} (-1)^{p-1} = \sum_{k \geq 0} \binom{p-1-k}{k} (-1)^{k} = a_{p-1}
$$
\bigskip

The remaining cases can be treated similarly giving the results stated in table~\ref{lambdaactions-1} and table~\ref{lambdaactions-2}.

\bigskip

\section{Constructing the automata for $M_n \mod p$}
In this section we will describe the states and transitions of the automata for $M_n \mod p$. These are summarised in table~\ref{results-1} for the case $p~\equiv~1~\mod~6$ and tables~\ref{results-2} and \ref{results-3} for the case $p~\equiv -1~\mod~p$. A '$c$' appearing in the tables represents a constant state (constant polynomial). The value of $c$ depends on $d$ and $p$. For given $d: 0 \leq d \leq p-1$ and state $s$ the tables give the state equal to
$$
\Lambda_{d, d} (s*Q^{p-1}).
$$
The transition $(\, s, d \,) \to \Lambda_{d, d} (s*Q^{p-1})$ is then part of the automaton.

\bigskip

The behaviour of the automaton depends on the value of $\; p \mod 6$. When $p~\equiv~1~\mod~6$ there are up to $p+4$ states consisting of the $p$ constant polynomials modulo $p$ and the $4$ polynomials
$$
s_1 = -2x^2y^3(y+1) - xy^2 + y, \; s_2 = x^2y^2(y+1) + xy, \; -xy(y+1), \; x^2 + xy^2 +2.
$$
When $p~\equiv~-1~\mod~6$ there are up to $p+6$ states consisting of the $p$ constant polynomials modulo $p$ and the $6$ polynomials
$$
s_1, \; s_2, \; -xy(y+1) - 1, \; xy(y+1) - 1, \; xy(y+1), \; xy(y+1) + 2.
$$
It is unclear whether all $p$ constant polynomials always appear as states.

\bigskip

\begin{table}[tbp]
  \centering
   \begin{tabular}{ | c || c | c | c | c | c |}
 \hline
 & & & &\\
 $d$ & $s_1$ & $s_2$ & $1$ & $-xy(y+1)$ & $xy(y+1)+2$ \\ 
 & & $2x^2y^2(y+1)+xy$ & & &\\
 & & & & &\\ \hline \hline
 & & & & &\\
$d = 0$ & $s_2$ & $s_2$ & $1$ & $0$ & $2$\\ 
& & & & &\\ \hline
& & & & &\\
$d = 1$ & $1$ & $1$ & $1$ & $c$ & $c$\\ 
& & & & &\\ \hline
& & & & &\\
$2 \leq d \leq p-3$ & $c$ & $c$ & $c$ & $c$ & $c$\\ 
& & & & &\\ \hline
& & & & &\\
$d=p-2$ & $-xy(y+1)$ & $c$ & $c$ & $c$ & $0$\\
& & & & &\\ \hline
& & & & &\\
$d=p-1$ & $xy(y+1) + 2$ & $c$ & $1$ & $-xy(y+1)$ & $xy(y+1)+2$\\
& & & & &\\ \hline
  \end{tabular}
  \caption{Table of states and transitions for $p \equiv 1 \mod 6$.}
  \label{results-1}
\end{table}

\bigskip

\begin{table}[tbp]
  \centering
   \begin{tabular}{ | c || c | c | c | c |}
 \hline
 & & & &\\
 $d$ & $s_1$ & $s_2$ & $1$ & $-xy(y+1)-1$\\ 
 & & $2x^2y^2(y+1)+xy$ & &\\
 & & & &\\ \hline \hline
 & & & &\\
$d = 0$ & $s_2$ & $s_2$ & $1$ & $-1$\\ 
& & & &\\ \hline
& & & &\\
$d = 1$ & $1$ & $1$ & $1$ & $c$\\ 
& & & &\\ \hline
& & & &\\
$2 \leq d \leq p-4$ & $c$ & $c$ & $c$ & $c$\\ 
& & & &\\ \hline
& & & &\\
$d = p-3$ & $c$ & $c$ & $c$ & $0$\\ 
& & & &\\ \hline
& & & &\\
$d=p-2$ & $-xy(y+1)-1$ & $c$ & $c$ & $c$\\
& & & &\\ \hline
& & & &\\
$d=p-1$ & $xy(y+1) - 1$ & $c$ & $-1$ & $-xy(y+1)$ \\
& & & &\\ \hline
  \end{tabular}
  \caption{Table of states and transitions for $p \equiv -1 \mod 6$.}
  \label{results-2}
\end{table}

\bigskip

\begin{table}[tbp]
  \centering
   \begin{tabular}{ | c || c | c | c |}
 \hline
 & & &\\
 $d$ & $xy(y+1)-1$ & $-xy(y+1)$ & $xy(y+1)+2$\\ 
 & & &\\
 & & &\\ \hline \hline
 & & &\\
$d = 0$ & $-1$ & $0$ & $2$\\ 
& & &\\ \hline
& & &\\
$1 \leq d \leq p-3$ & $c$ & $c$ & $c$\\ 
& & &\\ \hline
& & &\\
$d=p-2$ & $c$ & $c$ & $0$\\
& & &\\ \hline
& & &\\
$d=p-1$ & $xy(y+1) + 2$ & $-xy(y+1)-1$ & $xy(y+1)-1$\\
& & &\\ \hline
  \end{tabular}
  \caption{Table of states and transitions for $p \equiv -1 \mod 6$.}
  \label{results-3}
\end{table}

Figures~\ref{modpfigure1}, \ref{modpfigure2} and \ref{modpfigure3} provide an alternative pictorial summary of the automata for $M_n~\mod~p$.

\bigskip

\begin{figure}[tbp]
\begin{tikzpicture}[->,>=stealth',shorten >=1pt,auto,node distance=3.5cm,
                    semithick]
  \tikzstyle{every state}=[fill=pink,draw=none,text=black]

  \node[initial,state] (A)                    {$s_1$};
  \node[state]         (B) [below right of=A] {$xy(y+1) + 2$};
  \node[state]         (C) [above right of=B] {$0$};
  \node[state]         (D) [above right of=A] {$-xy(y+1)$};

  \path (A) edge    node {$p-1$} (B)
            edge node {$p-2$} (D)
        (B) edge [loop below] node {$p-1$} (B)
        edge node {$p-2$} (C)
         (C) edge [loop right] node {all} (C)
                      (D) edge [loop above] node {$p-1$} (D)
                      edge node {$0$} (C);
                      
 \end{tikzpicture}

                       \caption{Partial state diagram for $M_n \mod p$ when $p \equiv 1 \mod p$.}
                        \label{modpfigure1}
\end{figure}

\bigskip

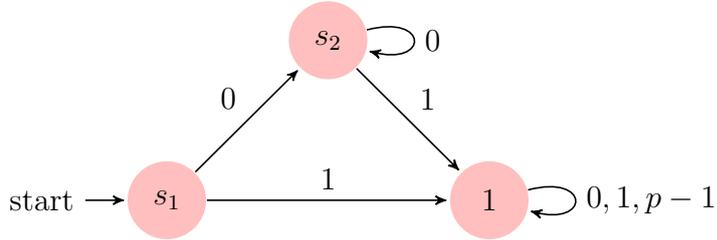
\begin{figure}[tbp]
\begin{tikzpicture}[->,>=stealth',shorten >=1pt,auto,node distance=3.0cm,
                    semithick]
  \tikzstyle{every state}=[fill=pink,draw=none,text=black]

  \node[initial,state] (A)                    {$s_1$};
    \node[state]         (C) [above right of=A] {$s_2$};
    \node[state]         (B) [below right of=C] {$1$};

  \path (A) edge    node {$1$} (B)
            edge node {$0$} (C)
        (B) edge [loop right] node {$0, 1, p-1$} (B)
         (C) edge [loop right] node {$0$} (C)
                     edge node {$1$} (B);
                      
 \end{tikzpicture}

                       \caption{Another part of the state diagram for $M_n \mod p$ when $p \equiv 1 \mod p$.}
                        \label{modpfigure2}
\end{figure}

\bigskip

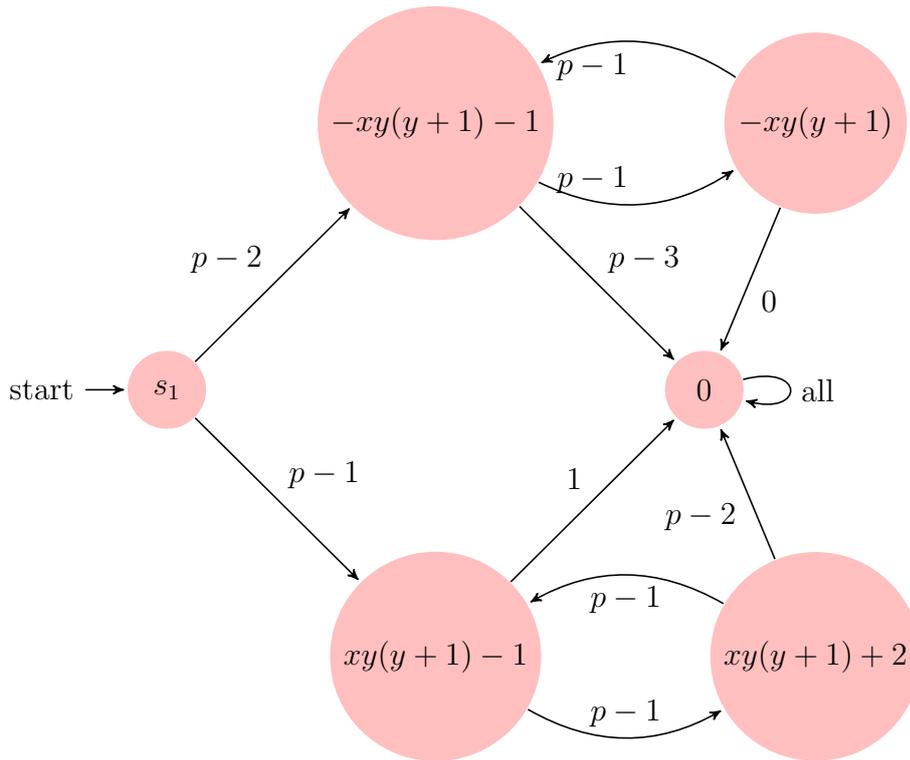
\begin{figure}[tbp]
\begin{tikzpicture}[->,>=stealth',shorten >=1pt,auto,node distance=5.0cm,
                    semithick]
  \tikzstyle{every state}=[fill=pink,draw=none,text=black]

  \node[initial,state] (A)                    {$s_1$};
  \node[state]         (B) [below right of=A] {$xy(y+1) - 1$};
  \node[state]         (E) [right of=B] {$xy(y+1) + 2$}; 
  \node[state]         (D) [above right of=A] {$-xy(y+1) - 1$};
   \node[state]         (F) [right of=D] {$-xy(y+1)$};
    \node[state]         (C) [below right of=D] {$0$};

  \path (A) edge    node {$p-1$} (B)
            edge node {$p-2$} (D)
        (B) edge [bend right] node {$p-1$} (E)
        edge node {$1$} (C)
         (C) edge [loop right] node {all} (C)
                      (D) edge [bend right] node {$p-1$} (F)
                      edge node {$p-3$} (C)
                      (E) edge [bend right] node {$p-1$} (B)
                      edge node {$p-2$} (C)
                       (F) edge [bend right] node {$p-1$} (D)
                      edge node {$0$} (C);
                      
 \end{tikzpicture}

                       \caption{Partial state diagram for $M_n \mod p$ when $p \equiv -1 \mod p$.}
                        \label{modpfigure3}
\end{figure}

\bigskip

\bigskip

The calculation of the states will rely on the data contained in table~\ref{lambdaactions-1} and table~\ref{lambdaactions-2}. As mentioned earlier, the initial state $s_1$ for the automata is the polynomial defined in equation~(\ref{R}). The second state $s_2$ is then given by
$$
s_2 = \Lambda_{0, 0} (\, s_1*Q(\, x, y \, )^{p-1} \,)
$$
$$
= \Lambda_{0, 0} (\, y(\, 1 - xy - 2x^2y^2 - 2x^2y^3 \, )*Q(\, x, y \, )^{p-1} \,)
$$
$$
= a_{p-1}  xy - 2 \biggr( (-a_p - b_p + 1) xy + x^2y^2 + x^2 y^3 \biggr) - 2 \biggr( ( 2 a_p + b_p -2 )xy - 2 x^2 y^2 - 2 x^2 y^3 \biggr)
$$
$$
= 2 x^2 y^2 + 2 x^2 y^3 + (\, a_{p-1} - 2 a_p + 2 \,) xy
$$
$$
= 2 x^2 y^2 + 2 x^2 y^3 + x y
$$
from equation~(\ref{anid}).

\bigskip

The next interesting state is $\Lambda_{p-2, p-2} (\, s_1*Q(\, x, y \, )^{p-1})$. We have

\begin{eqnarray*}
& &\Lambda_{p-2, p-2} (\, s_1*Q(\, x, y \, )^{p-1} \,) = -a_p - b_p + 1 +xy + xy^2 + \frac{1}{2}(b_{p+1} + c_{p+1}) \\
& & + \, 2(\sum_{k \geq 0} \binom{p-1-k}{k+3} \binom{p-4}{k+1} )  + 2(\sum_{k \geq 0} \binom{p-1-k}{k+3} \binom{p-4}{k+2} - xy - xy^2 )\\
&=& -xy(y+1) - a_p - b_p + 1 + \frac{1}{2} (b_{p+1} + c_{p+1} ) + 2(\sum_{k \geq 0} \binom{p-1-k}{k+3} \binom{p-3}{k+2} ).\\ 
& &
\end{eqnarray*}

Now since
\begin{eqnarray*}
&\sum_{k \geq 0}& \binom{p-1-k}{k+3} \binom{p-3}{k+2} = \frac{1}{2} \sum_{k \geq 0} \binom{p-1-k}{k+3} (-1)^k (k+4)(k+3) \\
&=& -\frac{1}{2} \sum_{k \geq 3} \binom{p+2-k}{k} (-1)^k k(k+1) \\ 
&=& -\frac{1}{2} ( c_{p+2} + (p+1) - 4 \binom{p}{2} + b_{p+2} + (p+1) - 2 \binom{p}{2} ) \\ 
&\equiv & - \frac{1}{2} (c_{p+2} + b_{p+2} + 2) \mod p \\
& &
\end{eqnarray*}
we have
$$
\Lambda_{p-2, p-2} (\, s_1*Q(\, x, y \, )^{p-1} \,) = -xy(y+1) - a_p - b_p - 1 + \frac{1}{2} b_{p+1} + \frac{1}{2} c_{p+1} - b_{p+2} - c_{p+2}. \\
$$
\bigskip
So, using equation~\ref{long} and operating modulo $p$,
$$
\Lambda_{p-2, p-2} (\, s_1*Q(\, x, y \, )^{p-1} \,) =
 \begin{cases}
-xy(y+1)  \; & \text{if $\; p \equiv 1 \mod 6$;}\\
-xy(y+1) - 1  \; & \text{if $\; p \equiv -1 \mod 6.$}\\
\end{cases}
$$
\bigskip

The next state to appear is $\Lambda_{p-1, p-1} (\, s_1*Q(\, x, y \, )^{p-1} \,)$.
\begin{eqnarray*}
&\Lambda_{p-1, p-1}&( s_1*Q( x, y )^{p-1} ) = -a_{p-1} - (-a_p - b_p + 1 +xy + xy^2) +  (c_{p+1} + b_{p+1} ) \\
& & -   (c_{p+1} + 3 b_{p+1} + 2 a_{p+1} - 2) + 2(xy + xy^2) \\
&=& xy(y+1) - a_{p-1} + a_p + b_p - 2 b_{p+1} - 2 a_{p+1} + 1 \\
& & \\
&=&
\begin{cases}
xy(y+1) + 2 \; & \text{if $\; p \equiv 1 \mod 6$;}\\
xy(y+1) - 1 \; & \text{if $\; p \equiv -1 \mod 6$;}\\
\end{cases}
\end{eqnarray*}
using equation (\ref{long2}).
\bigskip
The transitions $\Lambda_{d, d}( s_2*Q( x, y )^{p-1} )$ all produce constant states except for 
$\Lambda_{0, 0}( s_2*Q( x, y )^{p-1} )$.
\begin{eqnarray*}
&\Lambda_{0, 0}&( s_2*Q( x, y )^{p-1} ) = a_{p-1} xy + 2 b_p xy + 2(-a_p - b_p + 1) xy + 2x^2y^2 + 2x^2y^3\\
&=& (\, a_{p-1} - 2 a_p + 2 \,) xy + 2x^2y^2 + 2x^2y^3\\
&=& s_2\\
& &
\end{eqnarray*}
using equation (\ref{anid}).
\bigskip
In order to complete the entries from tables \ref{results-1}, \ref{results-2} and \ref{results-3} it is enough to examine the transitons $\Lambda_{d, d}( xy(y+1)*Q( x, y )^{p-1} )$ (noting that $xy(y+1)$ is not actually a state). A similar calculation to the one for $\Lambda_{p-2, p-2}( s_1*Q( x, y )^{p-1} )$ can be used to show that
\begin{eqnarray*}
\Lambda_{p-3, p-3}( xy(y+1)*Q( x, y )^{p-1} ) &=& \frac{1}{2} (\, b_{p+2} - c_{p+2} \,) ;\\
& &\\
\Lambda_{p-3, p-3}( 1*Q( x, y )^{p-1} ) &=& \frac{1}{2} (\, c_{p+1} - b_{p+1} \,).\\
& &
\end{eqnarray*}
We then have
\begin{eqnarray*}
&\Lambda_{p-3, p-3}&( (\, -xy(y+1) - 1 \,)*Q( x, y )^{p-1} ) = \frac{1}{2} \biggr(\, (c_{p+2} - c_{p+1} ) - (b_{p+2} - b_{p+1} ) \,\biggr)\\
&=& \frac{1}{2} \biggr(\, (-a_p -2 b_p - c_p\, ) - (\, -a_p - b_p \,) \biggr)\\
&=& -\frac{1}{2} (c_p + b_p )\\
&\equiv & 0 \mod p\\
& &
\end{eqnarray*}
using equation (\ref{bcnid}).

We also have
\begin{eqnarray*}
&\Lambda_{p-1, p-1}&( (\, xy(y+1) \,)*Q( x, y )^{p-1} ) \\
&=& b_{p} + (\, -a_p -b_p + 1 \,) + xy + xy^2\\
&=& 1 - a_p + xy(y+1)\\
&=&
\begin{cases}
xy(y+1) \; & \text{if $\; p \equiv 1 \mod 6$;}\\
xy(y+1) + 1 \; & \text{if $\; p \equiv -1 \mod 6$.}\\
\end{cases}
\end{eqnarray*}

\section{Conclusions}
The values of $n$ mentioned in table~\ref{results} for which $M_n \equiv 0 \mod p$ can be immediately derived from an inspection of tables~\ref{results-1}, \ref{results-2} and \ref{results-3} and the associated state diagrams in figures~\ref{modpfigure1}, \ref{modpfigure2} and \ref{modpfigure3}.
\bigskip
A lower bound for the asymptotic density of the set $S_p(0)$ can then be derived from the following result from \cite{Burns:2016v0}
\bigskip
\begin{thm}
\label{asyden}
Let 
$$
S(\, q, r, s, t )\, = \{ (\, qi + r )\, q^{sj + t} : i, j \in \mathbb{N} \}
$$
and
$$
S^{'}(\, q, r, s, t )\, = \{ (\, qi + r )\, q^{sj + t} : i, j \in \mathbb{N}, j \geq 1 \}
$$
for integers $q, r, s, t \in \mathbb{Z} $ with $q, s > 0$, $t \geq 0$ and $0 \leq r < q$. Then the asymptotic density of the set $S$ is $(\, q^{t + 1 - s} (\, q^{s} - 1 )\, )\, ^{-1} $.  The asymptotic density of the set $S^{'}$ is \mbox{$(\, q^{t + 1} (\, q^{s} - 1 )\, )\, ^{-1} $}.
\end{thm}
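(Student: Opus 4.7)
The plan is to count directly the number of elements of $S(q,r,s,t)$ that lie in $[1, N]$ and divide by $N$. The key structural observation I would establish first is that when $r \geq 1$ every element of $S$ admits a \emph{unique} representation as $(qi + r) q^{sj+t}$: in base $q$ such a number has $sj+t$ trailing zeros followed by the nonzero digit $r$ and then the base-$q$ digits of $i$, so the number of trailing zeros determines $sj+t$ (and hence $j$, since $s$ and $t$ are fixed), after which $i$ is recovered by division. This converts the counting problem into a clean double sum.

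Given this uniqueness I would write
$$
\#\{ n \in S : n \leq N \} = \sum_{j \geq 0} \#\{ i \in \mathbb{N} : (qi + r) q^{sj+t} \leq N \}.
$$
For each fixed $j$ the inner cardinality equals $\lfloor (N q^{-sj-t} - r)/q \rfloor + 1 = N q^{-sj-t-1} + O(1)$, and vanishes once $q^{sj+t} > N$. Only $O(\log_q N)$ values of $j$ contribute, so the total error from the floor corrections is $O(\log N) = o(N)$ and can be absorbed in the limit.

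The main term is a geometric series which I would evaluate as
$$
\sum_{j \geq 0} \frac{N}{q^{sj+t+1}} = \frac{N}{q^{t+1}} \cdot \frac{1}{1 - q^{-s}} = \frac{N}{q^{t+1-s}(q^s - 1)}.
$$
Dividing by $N$ and sending $N \to \infty$ produces the claimed density $\bigl(q^{t+1-s}(q^s - 1)\bigr)^{-1}$. For $S'(q,r,s,t)$ the computation is identical except that the sum runs over $j \geq 1$, supplying an extra factor of $q^{-s}$ and hence the density $\bigl(q^{t+1}(q^s - 1)\bigr)^{-1}$.

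The only delicate point I anticipate is the degenerate case $r = 0$: there $(qi + r) q^{sj+t} = i \, q^{sj+t+1}$ admits distinct $(i,j)$ representations, the naive sum overcounts, and the stated formula must be interpreted with care or the hypothesis tacitly strengthened to $r \geq 1$. This is not a genuine obstacle for the applications to Table~\ref{results}, where every form appearing has $r \geq 1$, so I would simply record that restriction rather than adapt the proof via inclusion--exclusion.
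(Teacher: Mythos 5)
Your argument is correct, but there is nothing in this paper to compare it against: the paper states Theorem~\ref{asyden} as a quoted result from \cite{Burns:2016v0} and gives no proof, using it only to convert the residue classes in Table~\ref{results} into density bounds. Your direct count is the natural argument and it goes through: for $1 \leq r < q$ the $q$-adic valuation of $(qi+r)q^{sj+t}$ equals $sj+t$, so (since $s>0$) the pair $(i,j)$ is recovered uniquely, the inner count for fixed $j$ is $Nq^{-sj-t-1}+O(1)$ with only $O(\log_q N)$ values of $j$ contributing, and summing the geometric series gives $N\bigl(q^{t+1-s}(q^s-1)\bigr)^{-1}+O(\log N)$, hence the stated density; restricting to $j\geq 1$ multiplies the main term by $q^{-s}$ and gives the density of $S'$. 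Your caveat about $r=0$ is well taken and is in fact a defect of the statement as printed: with $q=2$, $s=1$, $t=0$, $r=0$ the set $S$ consists of $0$ together with all even positive integers, of density $\tfrac{1}{2}$, while the formula predicts $1$; so the hypothesis should read $1 \leq r < q$. This does not affect the paper, since every form used in Table~\ref{results} has $r \in \{1, 2, p-2, p-1\}$.
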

\bigskip
From above we know that if $p \equiv 1 \mod p$ then $M_n \equiv 0 \mod p$ when n is in the forms
\begin{align*}
\mbox{$n = (pi + 1)p^k - 2 \;$ for $\; i \geq 0 \;$ and $\; k \geq 1$.} \\
\mbox{$n = (pi + p - 1)p^k - 1 \;$ for $\; i \geq 0 \;$ and $\; k \geq 1$.} \\ 
\\
\end{align*}

Each of the $2$ forms has asymptotic density $\frac{1}{p(p-1)}$. Therefore, when $p \equiv 1 \mod 6$ the asymptotic density of $S_p(0) \geq \frac{2}{p(p-1)}$. 

For $p \equiv -1 \mod 6$ there are $4$ forms of numbers to consider. These are 
\begin{align*}
\mbox{$n = (pi + 1)p^{2k} - 2 \; $ for $i \geq 0 \;$ and $\; k \geq 1$.}\\
\mbox{$n = (pi + p - 2)p^{2k+1} - 2 \; $ for $i \geq 0 \;$ and $\; k \geq 0$.} \\
\mbox{$n = (pi + 2)p^{2k+1} - 1 \;$ for $i \geq 0 \;$ and $\; k \geq 0$.} \\
\mbox{$n = (pi + p - 1)p^{2k} - 1 \; $ for $i \geq 0 \;$ and $\; k \geq 1$.} \\
\\
\end{align*}
The first and fourth forms have asymptotic density $(\, p(\, p^2 - 1 \,) \,)^{-1}$. 
The second and third forms have asymptotic density $(\, p^2 - 1 \,)^{-1}$. 
Therefore, when $p \equiv -1 \mod 6$ the asymptotic density of $S_p(0)$ is again $\geq \frac{2}{p(p-1)}$. 
\bigskip

Tables~\ref{results-1}, \ref{results-2} and \ref{results-3} and the state diagrams in Figures~\ref{modpfigure1}, \ref{modpfigure2} and \ref{modpfigure3} can be used to determine which numbers $n$ have $M_n \equiv x \mod p$ for other values of $x$. For example, figure \ref{modpfigure2} shows that if $p \equiv 1 \mod 6$ then \mbox{$M_n \equiv 1 \mod p \;$}  when the base $p$ representation of $n$ contains only $0$'s and $1$'s. Figure \ref{modpfigure1} shows that if $p \equiv 1 \mod 6$ then \mbox{$M_n \equiv 2 \mod p \;$}  when $n = p^k -1$ for some $k \in \mathbb{N}$.

\bigskip

As mentioned in the introduction results on forbidden residues of $M_n \mod p^k$ have been proved for some primes $p$ and some $k \geq 2$. In order to whether there are forbidden residues $\mod p$ itself the constant states of the automata would need to be examined. Since
$$
\Lambda_{d, d} ( 1 * Q( x, y )^{p-1} ) = c(p, d) :=\sum_{k \geq 0} \binom{p-1-k}{d-2k} \binom{d}{k} (-1)^d
$$
in order to show there are no forbidden residues $\mod p$ it is sufficient to show that the set
$$
\{ \, c(p, d):  \, 0 \leq d \leq p-1 \} 
$$
generates $( \frac{ \mathbb{Z}}{p \mathbb{Z}} )^{\times}$. 

\bigskip

As shown in \cite{1612.08146} the asymptotic density of $S_p(0)$ is actually $1$ for some primes, e.g. $p = 7, 17,19$. In these cases, there is a $d: 2 \leq d \leq p-2$ such that
$$
\Lambda_{d, d} ( 1 * Q( x, y )^{p-1} ) = 0.
$$
It then follows that
$$
\Lambda_{d, d} ( c * Q( x, y )^{p-1} ) = 0
$$
for all constants $c$. As a result, any $n$ which has a base-$p$ representation containing $2$ or more digits $d$ satisfies $M_n \equiv 0 \mod p$. The asymptotic density of this set is $1$. It would therefore be of interest to determine for which $p$ and $d$
$$
\sum_{k \geq 0} \binom{p-1-k}{d-2k} \binom{d}{k} \equiv 0 \mod p.
$$

\bigskip

\bibliographystyle{plain}
\begin{small}
\bibliography{ref}
\end{small}

\end{document}